\documentclass[12pt]{amsart}
\usepackage{amsmath,amsthm,amssymb,fullpage,enumerate}

\newcommand{\ceil}[1]{\left \lceil #1 \right \rceil}
\newcommand{\floor}[1]{\left \lfloor #1 \right \rfloor}
\newcommand{\cart}{\, \Box \,}
\newcommand{\capt}{\mathrm{capt}}
\newcommand{\prob}[1]{\mathrm{Pr}\left [ #1 \right ]}
\newcommand{\expect}[1]{\mathbb{E} \left [ #1 \right ]}
\newcommand{\eps}{\varepsilon}
\newcommand{\given}{\, \vert \,}

\newcommand{\Z}{{\mathbb Z}}

\newcommand{\seq}{\mathrm{start}}

\newcommand{\dist}{\mathrm{dist}}

\newtheorem{theorem}{Theorem}
\newtheorem{lemma}[theorem]{Lemma}

\newtheorem{cor}[theorem]{Corollary}

\begin{document}
\title{The capture time of the hypercube}

\author{Anthony Bonato}
\address{Department of Mathematics, Ryerson University, Toronto, ON, Canada, M5B 2K3}
\email{\tt abonato@ryerson.ca}

\author{Przemys{\l}aw Gordinowicz}
\address{Institute of Mathematics, Technical University of Lodz, {\L}\'{o}d\'{z}, Poland}
\email{\tt pgordin@p.lodz.pl}

\author{William B. Kinnersley}
\address{Department of Mathematics, Ryerson University, Toronto, ON, Canada, M5B 2K3}
\email{\tt wkinners@ryerson.ca}

\author{Pawe{\l} Pra{\l}at}
\address{Department of Mathematics, Ryerson University, Toronto, ON, Canada, M5B 2K3}
\email{\tt pralat@ryerson.ca}

\thanks{Supported by grants from NSERC and Ryerson}
\keywords{Cops and Robbers, hypercube, coupon-collector problem}
\subjclass{05C57, 05C80}

\begin{abstract}
In the game of Cops and Robbers, the capture time of a graph is the minimum number of moves needed by the cops to
capture the robber, assuming optimal play. We prove that the capture time of the $n$-dimensional hypercube is $\Theta (n\ln n).$ Our methods
include a novel randomized strategy for the players, which involves the analysis of the coupon-collector problem.
\end{abstract}

\maketitle

\section{Introduction}

The game of Cops and Robbers (defined, along with all the standard notation, at the end of this section) is usually studied in the context of the minimum number of cops needed to have a winning strategy, or \emph{cop number}. The cop number is often a challenging graph parameter to analyze, and establishing upper bounds for this parameter is the focus of Meyniel's conjecture: the cop number of a connected $n$-vertex graph is $O(\sqrt{n}).$ For additional background on Cops and Robbers and Meyniel's conjecture, see the book~\cite{bonato}.

Our focus in the present article is not on the number of cops needed, but rather, how long it takes them to win. To be more precise, the
\emph{length} of a game is the number of rounds it takes (not including the
initial or $0$th round) to capture the robber (the degenerate case is the game played on $K_1$, which has length $0$). We say that a play of the game with $c(G)$
cops is \emph{optimal} if its length is the minimum over all possible strategies for the cops, assuming the robber is trying to evade capture for as
long as possible (here $c(G)$ denotes the cop number of $G$). There may be many optimal plays possible (for example, on the path $
P_{4}$ with four vertices, the cop may start on either of the two vertices in the centre), but the length of
an optimal game is an invariant of $G.$ When $k$ cops play on a graph with $k \ge c(G),$ we denote this invariant $\mathrm{capt}_k(G),$
which we call the $k$-\emph{capture time} of $G.$ In the case $k=c(G)$, we just write
$\mathrm{capt}(G)$ and refer to this as the \emph{capture time} of $G.$  Note that $\mathrm{capt}_k(G)$ is monotonically non-increasing with $k$. The capture time parameters may be viewed as temporal counterparts to the cop number, and were introduced in~\cite{boncapt}.

To date, little is known for exact values or even bounds on the capture times of various graphs. In \cite{boncapt}, the authors proved that if $G$ is cop-win (that is, has cop number $1$) of order $n\ge5,$ then $\mathrm{capt}(G) \leq n-3.$ By considering small-order cop-win graphs, the bound was improved to $\mathrm{capt}(G)\le n-4$ for $n \ge 7$ in \cite{gav1}. Examples were given of planar cop-win graphs in both \cite{boncapt,gav1} which prove that the bound of $n-4$ is optimal. Beyond the case $k=1$, a recent paper of Mehrabian~\cite{meh} investigates the $2$-capture time of grids (that is, Cartesian products of paths). It was shown in \cite{meh} that if $G$ is the Cartesian product of two trees, then $\mathrm{capt}(G)=\lfloor \mathrm{diam}(G)/2 \rfloor ;$  hence, the $2$-capture time of an $m \times n$ grid is $\lfloor \frac{m+n}{2} \rfloor -1 .$ Other recent works on capture time where a visible or an invisible robber moves via a random walk are~\cite{kmp,kp}.

The question of determining the capture time of higher dimensional grids was left open in~\cite{meh}. Perhaps the simplest such graphs are the hypercubes, written $Q_n$, which may be viewed as the $n$-fold Cartesian products of $K_2$. In the present paper we derive the asymptotic order of the capture time of the hypercube. It was established in \cite{mm} that the cop number of a Cartesian product of $n$ trees is $\lceil \frac{n+1}{2} \rceil$; in particular, $c(Q_n) = \lceil \frac{n+1}{2} \rceil$. We note that the cop numbers of Cartesian and other graph products were investigated first in \cite{nn}.

Our main result is the following theorem, whose proof is deferred to the next sections.

\begin{theorem}
\label{time} For $n$ an integer, we have that
$$\mathrm{capt}(Q_n) = \Theta(n \ln n).$$
\end{theorem}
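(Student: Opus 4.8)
The plan is to prove both bounds through a single bookkeeping device. Identify $V(Q_n)$ with $\{0,1\}^n$, so that a legal move flips exactly one coordinate or stays put, and for each cop $m$ let $D_m$ be the set of coordinates on which cop $m$ currently disagrees with the robber, so that $\size{D_m}=\dist(\text{cop}_m,\text{robber})$ and capture means $\size{D_m}=0$ for some $m$. The two facts driving everything are: a cop can decrease $\size{D_m}$ by at most $1$ in a round (it flips one coordinate), while the robber's single flip of a coordinate $j$ changes \emph{every} $\size{D_m}$ by exactly $\pm 1$, decreasing it when $j\in D_m$ and increasing it when $j\notin D_m$. I expect the coupon-collector behaviour to appear precisely because, when the cops are close, the robber's flip lands in a given $D_m$ only with probability $\size{D_m}/n$, so resolving the last coordinates is slow.

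For the lower bound I would let the robber start antipodal to all the cops and then, each round, flip a uniformly random coordinate, and track $\phi(t)=\min_m\size{D_m(t)}$. Against \emph{any} cop strategy, the nearest cop reduces its distance by at most $1$ per move, while the random flip raises that distance by $1$ with probability $(n-\size{D_m})/n$; hence a net decrease of $2$ in a round requires the robber to hit one of the $\size{D_m}$ disagreeing coordinates, an event of probability at most $\size{D_m}/n$. Thus passing through distance level $\ell$ takes $\Omega(n/\ell)$ rounds in expectation, and summing $\sum_\ell n/\ell$ over the final $\Theta(n)$ levels gives expected survival $\Omega(n\ln n)$. This transfers to the capture time because, for every deterministic cop strategy $\sigma$, the worst robber does at least as well as the random one, so $\mathrm{capt}(Q_n)=\min_\sigma\max_\rho L(\sigma,\rho)\ge\min_\sigma\expect{L(\sigma,R)}=\Omega(n\ln n)$, where $R$ is the random robber. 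The obstacle here is that $\ceil{\tfrac{n+1}{2}}$ cops act at once, so I must prevent $\phi$ from collapsing quickly; I would control this by a union bound over the $O(n)$ cops (a constant-factor loss is harmless) or, more carefully, by showing that with high probability the random robber keeps \emph{every} cop at distance at least $2$ until time $\Omega(n\ln n)$.

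For the upper bound I would design a deterministic cop strategy maintaining the invariant that the sets $D_m$ jointly cover all $n$ coordinates, so the robber can never flip a coordinate that simultaneously increases the distance to every cop; the surplus cop available when $n$ is even is exactly what makes this covering sustainable. With each cop greedily flipping a disagreeing coordinate, the potential $\Phi=\sum_m\size{D_m}$ falls rapidly while the cops are far (by roughly $2\ceil{\tfrac{n+1}{2}}\approx n$ per round, since when distances are large almost no coordinate is agreed upon by many cops), reducing $\Phi$ from $\Theta(n^2)$ to $\Theta(n)$ in $O(n)$ rounds. In the endgame the remaining unresolved coordinates are eliminated essentially one at a time, and forcing the robber to expose the last unmatched coordinate costs $\Theta(n/\ell)$ rounds when $\ell$ remain; this is the coupon-collector recursion, contributing $\sum_{\ell}n/\ell=O(n\ln n)$, and adding the two phases yields $\mathrm{capt}(Q_n)=O(n\ln n)$.

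The main obstacle, and the technical heart of the paper, is the upper-bound endgame: I must specify the cop strategy so that the covering invariant is provably preserved and the robber cannot stall indefinitely by repeatedly un-matching already-resolved coordinates, and then prove the coupon-collector upper bound in the \emph{adversarial} setting rather than the random one. I would attack this by showing that whenever a coordinate has been matched by the designated catching cop, the covering forces the robber either to leave it matched or to pay by decreasing some other distance, so that a suitably weighted potential still admits the harmonic-sum bound; a concentration (Chernoff-type) estimate on the coupon-collector completion time would then give the $O(n\ln n)$ guarantee, matching the lower bound and establishing $\mathrm{capt}(Q_n)=\Theta(n\ln n)$.
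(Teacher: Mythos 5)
Your lower bound is, in outline, the paper's argument: a uniformly random robber started far from every cop, the observation that a net decrease of $2$ in the distance to a fixed cop requires the robber to flip one of the $\size{D_m}$ disagreeing coordinates, and a coupon-collector harmonic sum. Two ingredients you only wave at must actually be supplied: a proof that against the random robber no cop strategy does better than the greedy, parity-preserving one (the paper's Lemma~\ref{greedy_optimal}, an induction on the time horizon), and --- crucially --- a lower \emph{tail} bound for the coupon-collector completion time (the paper's Lemma~\ref{coupon}). An expectation bound is not enough: the survival time is the \emph{minimum} over $\Theta(n)$ cops of their individual capture times, and $\expect{\min_m \tau_m}$ can be far smaller than $\min_m \expect{\tau_m}$; the union bound you invoke needs each cop's probability of early capture to be $o(1/n)$, which is exactly what the $\exp(-m^{-1+\eps}m_0)$ estimate delivers.

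The genuine gap is the upper bound, where your mechanism is not the paper's and, as described, does not give $O(n\ln n)$. First, the coupon-collector recursion $\sum_{\ell} n/\ell$ has no adversarial counterpart: a deterministic robber will simply never flip a coordinate in the designated cop's small residual disagreement set unless forced to, so ``forcing the robber to expose the last unmatched coordinate costs $\Theta(n/\ell)$ rounds'' is unjustified, and a Chernoff-type estimate is vacuous when there is no randomness. Second, your covering invariant only guarantees that each robber flip lies in \emph{some} $D_m$, i.e.\ $\min_j \size{\{m : j \in D_m\}} \ge 1$; an adversarial robber who always flips a coordinate covered by a single cop makes $\Phi = \sum_m \size{D_m}$ decrease by only $O(1)$ per round (the cops contribute $-k$, the robber's flip contributes $k - 2\size{\{m : j \in D_m\}}$), which yields an $O(n^2)$ bound, not $O(n\ln n)$; the claimed drop of roughly $n$ per round already fails when all cops share a vertex, since then all $D_m$ coincide. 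In the paper the $\lg n$ factor does not come from a harmonic sum at all: it comes from a purely combinatorial $\ceil{\lg n}$-phase scheme in which the cops split into squads indexed by binary prefixes of the coordinate indices, the squads' \emph{inactive} coordinate sets partition $\{0,\dots,n-1\}$ at every moment (so every robber move, including a pass, is charged to exactly one squad's progress budget, and each coordinate is paid for once per phase), and each cop ends up disagreeing with the robber only on a designated pair of coordinates, where it finishes by the two-dimensional tree-product strategy of Lemma~\ref{lem:prodone} in $O(n)$ total further rounds. That refinement structure is the missing idea; without it your endgame does not close.
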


We consider separately the upper and lower bounds on $\mathrm{capt}(Q_n)$; see Sections~\ref{up} and \ref{lower}, respectively. The techniques used for the lower bound are especially novel, as we assume the robber moves randomly (essentially, as a random walk in the graph). Although a priori this may be counterintuitive (an optimal strategy would be for the robber to move away from the cops, which fails to occur with a given probability in a random strategy), we show that with positive probability the robber has a strategy to survive long enough to achieve the lower bound; the analysis involves consideration of the coupon-collector problem and a tail inequality on the expected amount of time needed to collect the coupons.

\subsection{Definitions and notation}
We consider only finite, reflexive, undirected graphs in the paper. For background on graph theory, the reader is directed to~\cite{west}.

The game of \emph{Cops and Robbers} was independently introduced in~\cite{nw,q} and the cop number was introduced in~\cite{af}. The game is played on a reflexive
graph; that is, vertices each have at least one loop. Multiple edges are
allowed, but make no difference to the play of the game, so we always assume there
is exactly one edge between adjacent vertices. There are two players
consisting of a set of \emph{cops} and a single \emph{robber}. The game is
played over a sequence of discrete time-steps or \emph{turns},
with the cops going first on turn $0$ and then playing on alternate time-steps.  A \emph{round} of the game is a cop move together with the subsequent robber move.  The cops and robber occupy vertices;
for simplicity, we often identify the player with the vertex they occupy. We
refer to the set of cops as $C$ and the robber as $R.$ When a player is ready to move in a round they must
move to a neighbouring vertex. Because of the loops, players can \emph{pass}, or remain on their own vertices. Observe that any subset of $C$ may
move in a given round. The cops win if after some finite number of rounds, one of them can occupy
the same vertex as the robber (in a reflexive graph, this is equivalent to the cop landing on the robber).
This is called a \emph{capture}. The robber
wins if he can
evade capture indefinitely. A \emph{winning strategy for the cops} is a set
of rules that if followed, result in a win for the cops. A \emph{winning
strategy for the robber} is defined analogously.

If we place a cop at each vertex, then the cops are guaranteed to win.
Therefore, the minimum number of cops required to win in a graph $G$ is a
well-defined positive integer, named the \emph{cop number} (or \emph{%
copnumber}) of the graph $G.$ We write $c(G)$ for the cop number of a graph $%
G$.

We use the notation $\lg n$ for the logarithm of $n$ in base 2, and $\ln n$ for the logarithm in the natural base.

\section{Upper bound}\label{up}
We establish an upper bound on the capture time of the hypercube by using a strategy for the cops which is analogous to the one described in~\cite{mm} (although the reader need not be familiar with that strategy to understand ours).  We obtain our result as a special case of a more general upper bound.

We begin with a lemma, which concerns a notion closely related to the capture time. The \emph{eccentricity} of a vertex $u$ in a connected graph $G$ is the maximum distance between $u$ and another vertex of $G$. The \emph{radius} of $G$, written $\mathrm{rad}(G)$, is the minimum eccentricity of a vertex of $G$. A \emph{central vertex} in a graph of radius $r$ is one whose eccentricity is $r.$

\begin{lemma}\label{lem:prodone}
Let $T_1$ and $T_2$ be trees, and consider the game played in $T_1 \cart T_2$ with a single cop.  If the robber moves at least $\mathrm{rad}(T_1 \cart T_2)$ times in the course of the game, then the cop can capture him (and can do so immediately after the robber's last move).
\end{lemma}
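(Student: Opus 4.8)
The plan is to reduce the two‑dimensional pursuit to two one‑dimensional (tree) pursuits, exploiting that a single cop is cop‑win on any tree. First I would record two structural facts. One is that $\mathrm{rad}(T_1 \cart T_2) = \mathrm{rad}(T_1) + \mathrm{rad}(T_2)$, with a central vertex realized by a pair $(z_1,z_2)$ of central vertices. The other is that the product metric splits, $d((a_1,a_2),(b_1,b_2)) = d_{T_1}(a_1,b_1) + d_{T_2}(a_2,b_2)$, so each robber step changes exactly one of the two projected distances, and by exactly $\pm 1$. I would start the cop at $(z_1,z_2)$; then the initial distance to the robber is at most $\mathrm{rad}(T_1)+\mathrm{rad}(T_2)=\mathrm{rad}(T_1\cart T_2)$, which is exactly the budget the lemma allows.

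The strategy I would give the cop is a \emph{shadowing} strategy: each round it spends its move pursuing the robber's projection in one factor while keeping the other matched. Concretely, the cop maintains the invariant that its $T_1$-projection equals the robber's — whenever the robber moves in $T_1$, the cop answers in $T_1$ to re-match (possible since a single cop shadows on a tree) — and in every round in which the robber moves in $T_2$ or stays put, the cop instead advances one step toward the robber along the $T_2$-geodesic. The governing monovariant is the pair of projected distances: the $T_1$-distance is held at $0$ (momentarily $1$ just after a $T_1$-move of the robber, then restored), while the $T_2$-distance is non-increasing and drops by one whenever the robber fails to flee in $T_2$. Because $T_2$ is a finite tree and the cop lies on the geodesic from $z_2$ to the robber's $T_2$-projection, the robber can increase its $T_2$-distance only by heading toward a leaf, and hence only finitely often — at most $\mathrm{rad}(T_2)$ times from the central start, and symmetrically for $T_1$.

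To finish I would charge each robber move against progress: a move is either a $T_1$-move (answered by the cop, consuming one of the at most $\mathrm{rad}(T_1)$ available flee-steps there) or a $T_2$-move or pass (after which the cop nets a reduction in the $T_2$-distance, of which at most $\mathrm{rad}(T_2)$ are needed). Summing, after at most $\mathrm{rad}(T_1)+\mathrm{rad}(T_2)=\mathrm{rad}(T_1\cart T_2)$ robber moves both projected distances vanish and the robber is captured, with the final reduction effected precisely on the cop's response to the robber's last move. The main obstacle, and the step I expect to be delicate, is exactly that a single cop controls only one coordinate per round, so the robber can attempt to stall by alternating the factor in which it moves, keeping one projected distance positive while the cop is busy with the other — this is, after all, why two cops are needed to win $T_1\cart T_2$ outright. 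The heart of the argument is therefore to show that such stalling is bounded: one must exhibit a single potential — combining the two projected distances with the number of robber moves already spent — that \emph{strictly} decreases on each robber move once pursuit begins, using the finiteness of each tree (bounded eccentricity from the centre) to cap the number of flee-steps in each factor. Forcing strict decrease on every robber move, rather than mere non-increase, and thereby pinning capture to the moment immediately after the robber's last move, is where the real work lies.
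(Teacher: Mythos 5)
Your proposed cop strategy is not the paper's, and unfortunately it does not work: the ``shadow in $T_1$, pursue in $T_2$'' rule is defeated by a robber who simply oscillates in the $T_1$ factor. Concretely, take $T_1=T_2=P_3$ and suppose the cop has achieved your invariant, so the players' $T_1$-projections agree while $d_2=\dist_{T_2}>0$. The robber now alternates between two adjacent vertices of $T_1$ forever. Under your rule every one of these rounds is a ``$T_1$-round,'' so the cop spends every move re-matching in $T_1$ and never advances in $T_2$; the robber makes arbitrarily many moves and is never caught. These oscillating moves are not ``flee-steps toward a leaf,'' so they are not controlled by your $\mathrm{rad}(T_1)$ budget, and your charging scheme collapses. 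You correctly sensed that the whole difficulty is to rule out stalling by a strictly decreasing potential, but no such potential exists for this strategy, because the strategy itself is refutable. (You also never establish the shadowing invariant from the initial position, but that is a secondary issue.)

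The paper avoids this by having the cop treat the two coordinates symmetrically rather than privileging one: writing $d_1,d_2$ for the two projected distances, the cop stands still when $d_1+d_2$ is even and otherwise moves to reduce the larger of $d_1,d_2$. This drives the game to $d_1=d_2$ after each cop turn and keeps it there, so the oscillating robber above loses ground: fleeing in $T_1$ makes the cop close $d_1$, and returning makes the cop close $d_2$, so $d_1+d_2$ drops by $2$ every two rounds. The payoff of that invariant is that the cop's trajectory projects to a (monotone) path in each tree, hence is a geodesic in $T_1\cart T_2$ from the central starting vertex, so the cop needs at most $\mathrm{rad}(T_1\cart T_2)$ moves, one per robber move after the first turn. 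If you want to repair your write-up, replace the shadowing rule by this parity-balancing rule; the geodesic observation then gives exactly the ``strictly decreasing on every robber move'' accounting you were looking for.
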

\begin{proof}
We use the cop strategy suggested in~\cite{mm}.  The cop starts on a central vertex in $T_1 \cart T_2$.  On the cop's turn, let $(u,v)$ denote the cop's position and $(u',v')$ the robber's.  Let $d_1 = \dist_{T_1}(u,u')$, let $d_2 = \dist_{T_2}(v,v')$, and let $d = d_1 + d_2$.  If $d$ is even, then the cop stands still. Otherwise, if $d_1 > d_2$, then the cop approaches the robber by changing the first coordinate of his position; if $d_2 > d_1$, then the cop approaches by changing his second coordinate.

By the cop's strategy, he might move on his first turn, but afterward moves only after the robber moves.  By symmetry, we may suppose $d_1 \ge d_2$ on the cop's first turn.  So long as $d_1 > d_2$, the cop will change only the first coordinate of his position until, after some cop move, $d_1 = d_2$.  After this point in the game, the cop's strategy ensures that $d_1 = d_2$ after each of the cop's turns, and hence, if the robber ever agrees with the cop in either coordinate, he will be captured on the cop's subsequent turn.  Consequently, when we restrict attention to the first coordinates of the cop's positions (that is, when we consider the projection onto $T_1$), the cop follows a path in $T_1$.  Likewise, the cop follows a path in the projection onto $T_2$.  Thus, the cop follows a shortest path in $T_1 \cart T_2$ to the vertex at which he eventually captures the robber.  Hence, the cop moves no more than $\mathrm{rad}(T_1 \cart T_2)$ times before capturing the robber.
\end{proof}

\begin{theorem} \label{thm:upper}
For every integer $n$, and all trees $T_0, T_1 \ldots, T_{n-1}$, we have that $$\capt(T_0 \cart T_1 \cart \cdots \cart T_{n-1}) \le \left ( \sum_{i=0}^{n-1} \mathrm{rad}(T_i) \right )\ceil{\lg n} - \floor{\frac{n-1}{2}} + 1.$$
\end{theorem}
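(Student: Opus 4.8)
The plan is to reduce everything to Lemma~\ref{lem:prodone} via two structural facts together with a divide-and-conquer (tournament) scheme. First I would record that distances in a Cartesian product add coordinatewise, so eccentricities add and hence $\mathrm{rad}(T_0 \cart \cdots \cart T_{n-1}) = \sum_{i=0}^{n-1}\mathrm{rad}(T_i)$; write $R$ for this common quantity. A central vertex of the product is obtained by choosing a central vertex in each factor, and this is where every cop will start. Second, I would check that the single-cop strategy of Lemma~\ref{lem:prodone} uses nothing about the factors beyond the fact that one can always take a step toward a target within a chosen factor and that the projections of the cop's walk are shortest paths. Consequently the lemma goes through with $T_1$ and $T_2$ replaced by arbitrary Cartesian products of trees, and this generalized form is what lets a single cop control an entire \emph{block} of coordinates at once.

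With these in hand I would organize the phases as a balanced binary tournament on the $n$ coordinates. In phase $1$, group the factors into blocks, each initially a pair $T_{2i}\cart T_{2i+1}$ (with one singleton block if $n$ is odd), giving $\ceil{n/2}$ blocks, and assign one cop to each via the lemma. In each of the subsequent $\ceil{\lg n}-1$ phases I would pair up the current blocks and merge each pair into a single block of twice the size, again assigning one cop per merged block and invoking the generalized lemma with the two halves as the two factors. Since the number of blocks halves each phase, after $\ceil{\lg n}$ phases in total a single block comprises all $n$ coordinates, at which point that block's cop sits on the robber and captures him. The crucial accounting is that the sum of the radii of the blocks equals $R$ at \emph{every} level, since each factor contributes its radius exactly once; Lemma~\ref{lem:prodone} then caps the number of robber moves any one block can absorb by the radius of that block, so a single phase costs at most $R$ rounds in total, for a running cost of at most $R\ceil{\lg n}$.

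The main obstacle, and the step I expect to demand the most care, is making the per-phase bound of $R$ rounds rigorous in the face of the robber's freedom to distribute his moves and to stall: once a block has been consolidated, the robber may try to waste time by moving within it, and I must argue, using the permanence of the move budget furnished by Lemma~\ref{lem:prodone} together with the extra cop counted in $\ceil{\frac{n+1}{2}}$, that any such move either advances the current phase or is impossible without immediate capture, so that no phase can be prolonged indefinitely. Finally I would pin down the lower-order term $-\floor{\frac{n-1}{2}}+1$ by a sharper count: the $+1$ absorbs the cops' initial move, while the savings $\floor{\frac{n-1}{2}}$ comes from the efficiency of the first, pairing phase, in which the $\floor{n/2}$ pairings each let a single cop absorb the radii of two factors simultaneously and the ``capture immediately after the robber's last move'' clause of Lemma~\ref{lem:prodone} shaves a round per pair. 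This last step is routine but fiddly arithmetic; the conceptual content lies entirely in the generalized lemma and in the invariant that bounds the per-phase cost.
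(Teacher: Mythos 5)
Your overall architecture (binary phases, per-phase cost $\sum_i \mathrm{rad}(T_i)$, total $R\ceil{\lg n}$ plus a lower-order correction) resembles the paper's, but the engine you put inside each phase does not work. The critical step is your claim that Lemma~\ref{lem:prodone} ``goes through with $T_1$ and $T_2$ replaced by arbitrary Cartesian products of trees.'' It does not: taking $F_1 = F_2 = K_2 \cart K_2$ gives $F_1 \cart F_2 = Q_4$, whose cop number is $\ceil{5/2} = 3$, so a single cop can never capture the robber there no matter how many times the robber moves; the generalized lemma is false. The proof of Lemma~\ref{lem:prodone} uses the tree structure of the factors essentially: greedy pursuit within a tree never backtracks (the robber remains ``beyond'' the cop), so the cop's projection onto each factor is a geodesic and his whole trajectory has length at most the eccentricity of his starting vertex. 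In a factor that is itself a nontrivial product this confinement fails. Two further problems would remain even with a working pursuit lemma for blocks: (a) at the start of phase $p \ge 2$ the block's cop is not at a central vertex of the merged block (he agrees with the robber in one half and sits at the old centre of the other), so the block's radius does not bound his remaining trajectory; and (b) the cop of Lemma~\ref{lem:prodone} stands still when the distance is even, so a robber who passes, or who shuttles inside a block other than the one you are currently charging, stalls your per-phase accounting --- you flag this difficulty but do not resolve it.

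The paper avoids all of this by never running a pursuit on a large product. Each cop is permanently responsible for one genuine pair $T_{2i} \cart T_{2i+1}$, where Lemma~\ref{lem:prodone} legitimately applies, and the doubling phases serve only to \emph{activate} coordinates one batch at a time: in a newly activated coordinate the cop is still at that factor's central vertex and chases within that single tree at cost $\mathrm{rad}(T_i)$, while in every previously conquered active coordinate he simply mirrors the robber's move, preserving agreement at zero cost. The squads' inactive coordinate sets partition $\{0,\dots,n-1\}$ at all times, so every robber move (or pass) is charged to exactly one squad; this yields the per-phase bound of $R$ and, after $\ceil{\lg n}-1$ phases plus a final endgame on each pair via Lemma~\ref{lem:prodone} (or Mehrabian's two-cop strategy when $n$ is even), the stated total. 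If you want to rescue your outline, replace ``re-run the two-factor lemma on the merged block'' with ``maintain agreement by mirroring in the already-won coordinates and chase only within the individual tree coordinates of the newly added half.''
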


Before proving Theorem~\ref{thm:upper}, we introduce some notation. We view the vertices of the product $T_0 \cart T_1 \cart \cdots \cart T_{n-1}$ as $n$-tuples with coordinates indexed by $0, 1, \ldots, n-1$.  Now consider the binary representations of the coordinate indices, each using $\ceil{\lg n}$ bits (where we allow leading zeros).  Let $s$ be a binary sequence of length at most $\ceil{\lg n}$.  We denote by $\seq(s)$ the set of coordinate indices whose binary representations start with $s;$ that is,
$$\seq(s) = \{i \in \Z \colon 0 \le i \le n-1 \textrm{ and } s \textrm{~is a prefix of the } \ceil{\lg n}\textrm{-bit binary representation of~} i\}.$$
Let $\circ$ denote the concatenation operator on binary strings.

\begin{proof}[Proof of Theorem~\ref{thm:upper}]
When $n \le 2$ the desired bound was proved in~\cite{meh}, so suppose $n \ge 3$.  Let $k = c(T_0 \cart T_1 \cart \cdots \cart T_{n-1}) = \lceil \frac{n+1}{2} \rceil$, and label the $k$ cops $C_0, C_1, \ldots, C_{k-1}$.  Each cop's strategy has two steps.

\vspace{0.1in}
\noindent {\bf Step 1}: The cop attempts to occupy a vertex that disagrees with the robber's vertex in only one or two coordinates.
\vspace{0.1in}

More precisely, for $0 \le i \le k-2$, cop $C_i$ seeks to agree with the robber in all coordinates except $2i$ and $2i+1$.  If $n$ is odd, then $C_{k-1}$ wants to agree with the robber in all coordinates except $n-1$; if instead $n$ is even, then $C_{k-1}$ wants to agree in all coordinates except $n-2$ and $n-1$, just like cop $C_{k-2}$.  Since the specific details of this step are more complex, we postpone them until after we discuss Step 2.

\vspace{0.1in}
\noindent {\bf Step 2}: The cop tries to actually capture the robber.
\vspace{0.1in}

Suppose first that $n$ is odd.  For $1 \le i \le k-2$, cop $C_i$ initially agrees with the robber in all coordinates except $2i$ and $2i+1$ (this was the goal of Step 1).  If the robber stands still, then so does $C_i$.  Suppose instead that the robber changes coordinate $j$ of his position.  If $j \not \in \{2i,2i+1\}$, then $C_i$ also changes coordinate $j$, in order to maintain his agreement with the robber.  Finally, if $j \in \{2i,2i+1\}$, then $C_i$ also changes one of these coordinates, in accordance with the strategy on $T_{2i} \cart T_{2i+1}$ described in Lemma~\ref{lem:prodone}.  (That is, $C_i$ considers the projections of both players' positions onto coordinates $2i$ and $2i+1$, and plays as he would on this subgraph.)  Cop $C_{k-1}$ plays as described above, except that when the robber stands still, he plays in accordance with an optimal strategy on $T_{n-1}$ (just as he does when the robber changes coordinate $n-1$).

When $n$ is even, cop $C_i$ plays just as above for $0 \le i \le k-3$.  Cops $C_{k-2}$ and $C_{k-1}$ play similarly, but with some small changes.  If the robber changes coordinate $j$ for $j \not \in \{n-2,n-1\}$, then $C_{k-2}$ and $C_{k-1}$ both change coordinate $j$ to maintain their agreement with the robber.  Otherwise, (including when the robber stands still) $C_{k-2}$ and $C_{k-1}$ together play in accordance with an optimal strategy on $T_{n-2} \cart T_{n-1}$.

\vspace{0.1in}

Now, for the completion of Step 1, the cops move as follows.  For $0 \le i \le n-1$, let $v_i$ be a central vertex in $T_i$; initially all $k$ cops start at vertex $(v_1, v_2, \ldots, v_n)$.   Each cop approaches the robber in $\ceil{\lg n-1}$ ``phases''.  During each phase, each cop deems certain coordinates \emph{active}; active coordinates remain active throughout all subsequent phases.  A coordinate that is not active is \emph{inactive}.  The cop aims to occupy a vertex that agrees with the robber's vertex in every active coordinate; once the cop has done so, he proceeds to the next phase (and chooses additional active coordinates).  On the robber's turn, he either stands still or changes exactly one coordinate of his position.  If he changes an active coordinate in which he and the cop previously agreed, then the cop changes that same coordinate; in all other cases, the cop approaches the robber in the direction of the first active coordinate in which he and the robber disagree.  (Since each $T_i$ is a tree, the cop's move is uniquely determined.)  Thus, once the cop and robber agree in an active coordinate, the cop maintains this agreement for the remainder of the game.  

We next specify how the cops choose active coordinates in Step 1. Fix a cop $C_i$ with $0 \le i \le k-2$ (we return to cop $C_{k-1}$ later). Recall that $C_i$ seeks to eventually agree with the robber in all coordinates other than $2i$ and $2i+1$; consequently, he will never deem these coordinates active.  During phase $j$, for $1 \le j \le \ceil{\lg n}-1$, a coordinate is active for $C_i$ precisely when it differs from $2i$ in one of the first $j$ bits.  (In some cases, a cop has the same active coordinates in phase $k$ as in phase $k-1$; in such cases, the cop finishes phase $k$ immediately, and proceeds directly to phase $k+1$.)  For example, when $n=11$, there are three phases and six cops; all cops' active coordinates in each phase are presented in Table~\ref{tab:example}.
Note that in phase $\ceil{\lg n} - 1$, cop $C_i$'s only remaining inactive coordinates are $2i$ and $2i+1$.
When $n$ is odd, cop $C_{k-1}$ uses the same approach; in phase $\ceil{\lg n} - 1$, only coordinate $n-1$ remains inactive.  When $n$ is even, $C_{k-1}$ moves identically to $C_{k-2}$ throughout Step 1.
\begin{table}[htb]
\centering
\begin{tabular}{|l||c|c|c|}
\hline
Cop & Phase 1 & Phase 2 & Phase 3 \\
\hline
\hline
$C_0$ &  $\{8,9,10\}$ &  $\{4,5,6,7,8,9,10\}$ &  $\{2,3,4,5,6,7,8,9,10\}$\\
$C_1$ &  $\{8,9,10\}$ &  $\{4,5,6,7,8,9,10\}$ &  $\{0,1,4,5,6,7,8,9,10\}$\\
$C_2$ &  $\{8,9,10\}$ &  $\{0,1,2,3,8,9,10\}$ &  $\{0,1,2,3,6,7,8,9,10\}$\\
$C_3$ &  $\{8,9,10\}$ &  $\{0,1,2,3,8,9,10\}$ &  $\{0,1,2,3,4,5,8,9,10\}$\\
$C_4$ &  $\{0,1,2,3,4,5,6,7\}$ &  $\{0,1,2,3,4,5,6,7\}$ &  $\{0,1,2,3,4,5,6,7,10\}$\\
$C_5$ &  $\{0,1,2,3,4,5,6,7\}$ &  $\{0,1,2,3,4,5,6,7\}$ &  $\{0,1,2,3,4,5,6,7,8,9\}$\\
\hline
\end{tabular}
\caption{Active coordinates during play on $Q_{11}.$} \label{tab:example}
\end{table}

To bound the number of rounds needed to complete Step 1, we group the cops into \emph{squads}.  A squad is a collection of cops having the same inactive coordinates. For example, from Table~\ref{tab:example} in Phase~1 we have one squad formed by $C_0,$ $C_1,$ $C_2,$ and $C_3$, and the second squad formed by $C_4$ and $C_5.$ For $1 \le i \le k$ and $1 \le j \le \ceil{\lg n} - 1$, coordinate $x$ is inactive for cop $C_i$ during phase $j$ precisely when $x$ agrees with $i$ in the first $j$ bits.  Thus, we may identify each possible squad by a binary sequence of length at most $\ceil{\lg n} - 1$; at any given point in the game, for a binary sequence $s$, let $S_s$ denote the squad of cops having inactive coordinates $\seq(s)$. 

Initially we have two nonempty squads, $S_0$ and $S_1$, and all cops begin in phase 1.  Since all cops begin at the same vertex, and cops in the same squad move identically, all members of a squad complete their phases simultaneously.  When the cops in $S_0$ finish phase 1, they all leave $S_0$ and join either $S_{00}$ or $S_{01}$.  Likewise, when $S_{01}$ finishes phase 2, it splits into $S_{010}$ and $S_{011}$.  In general, for $s$ a binary string with length $j$, such that $\seq(s) \neq \emptyset,$ squad $S_s$ is nonempty only while its members are in phase $j$.  Once the squad completes phase $j$ (for $j < \ceil{\lg n} - 1$), it splits into squads $S_{s \circ 0}$ and $S_{s \circ 1}$.  (Note that one of $S_{s \circ 0}$ or $S_{s \circ 1}$ could remain empty.  For example, on $Q_{11}$, squad $S_{11}$ is empty throughout the game; this is due to the fact that $\seq(11) = \emptyset$.)

Fix a binary string $s$ of length $j$ (for $1 \le j \le \ceil{\lg n} - 1$) such that $\seq(s) \neq \emptyset$.  Let $\hat s$ be $s$ with the last bit removed, and let $s'$ be $s$ with the last bit flipped.  Squad $S_s$ becomes nonempty when the members of squad $S_{\hat s}$ finish phase $j-1$ (or, in the case $j=1$, at the beginning of the game).  At this point, $S_{\hat s}$ splits into $S_s$ and $S_{s'}$.  Thus, at the beginning of phase $j$, each member of $S_s$ agrees with the robber in all but at most $|\seq(s')|$ active coordinates.  Moreover, for $i \in \seq(s')$, each cop in $S_s$ must make no more than $\mathrm{rad}(T_i)$ moves in the direction of coordinate $i$ before he agrees with the robber in that coordinate.  Hence, the members of $S_s$ finish phase $j$ after at most $N$ turns on which the robber either sits still or moves in the direction of some coordinate in $\seq(s)$, where $N = \sum_{i \in \seq(s')} \mathrm{rad}(T_i)$.

At any point in the game, the inactive coordinates (across all nonempty squads) partition $\{0, 1, \ldots, n-1\}$: this is clear at the beginning of the game, and is maintained under the splitting of squads.  Hence, on each robber move, at least one cop makes progress toward finishing either Step 1 or Step 2.  The number of rounds in which some cop makes progress in Step 1 is at most $\sum_{s} \sum_{i \in \seq(s')} \mathrm{rad}(T_i)$, where the sum is over all binary strings $s$ with length at most $\ceil{\lg n}-1$. However, this upper bound can be simplified as follows:
\begin{align*}
\sum_s \sum_{i \in \seq(s')} \mathrm{rad}(T_i) &= \sum_{j=1}^{\ceil{\lg n}-1} 
\sum_{\tiny \begin{array}[b]{c}
      s \text{ has}\\
      \text{length } j
    \end{array}}
\sum_{i \in \seq(s')} \mathrm{rad}(T_i)\\
                                               &= \sum_{j=1}^{\ceil{\lg n}-1} \sum_{i=0}^{n-1} \mathrm{rad}(T_i) = \left (\sum_{i=0}^{n-1} \mathrm{rad}(T_i) \right )(\ceil{\lg n}-1).
\end{align*}

Once cop $C_i$ has started Step 2, for $1 \le i \le k-2$, the robber can move in the direction of coordinates $2i$ and $2i+1$ no more than $\mathrm{rad}(T_{2i}) +\mathrm{rad}(T_{2i+1})-1$ times before $C_i$ captures him (by Lemma~\ref{lem:prodone}).  Likewise, if $n$ is odd, then once cop $C_{k-1}$ has entered Step 2, the robber can spend no more than $\mathrm{rad}(T_{n-1})-1$ turns either standing still or moving in the direction of coordinate $n-1$; if $n$ is even, then once cops $C_{k-2}$ and $C_{k-1}$ have entered Step 2, the robber can spend no more than $\floor{\mathrm{diam}(T_{n-2} \cart T_{n-1})/2}$ turns either standing still or moving in the direction of coordinates $n-2$ and $n-1$.  (This follows from the strategy given by Mehrabian~\cite{meh} for the 2-capture time of Cartesian products of two trees, with the caveat that the cops here require one extra turn to reach the initial positions used in Mehrabian's strategy.)  In either case, the number of rounds during which some cop makes progress in Step 2 is bounded above by $\sum_{i=0}^{n-1} \mathrm{rad}(T_i) - \floor{(n-1)/2} + 1$.  Thus, we have that
\begin{align*}
\capt(T_0 \cart T_1 \cart \cdots \cart T_{n-1}) &\le \left (\sum_{i=0}^{n-1} \mathrm{rad}(T_i) \right )(\ceil{\lg n}-1) + \sum_{i=0}^{n-1} \mathrm{rad}(T_i) - \floor{(n-1)/2} + 1\\
                                                  &= \left ( \sum_{i=0}^{n-1} \mathrm{rad}(T_i) \right )\ceil{\lg n} - \floor{\frac{n-1}{2}} + 1,
\end{align*}
as claimed.
\end{proof}

When $T_0 = T_1 = \cdots = T_{n-1} = K_2$, we have the following special case of Theorem~\ref{thm:upper}.

\begin{cor}\label{cor:upper_cube}
For $n$ an integer, we have that
$$
\capt(Q_n) \le n \ceil{\lg n} - \floor{\frac{n-1}{2}} + 1  = (1+o(1)) n \lg n .
$$
\end{cor}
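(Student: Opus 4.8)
The plan is to derive this corollary directly from Theorem~\ref{thm:upper}, exploiting the fact that $Q_n$ is precisely the $n$-fold Cartesian product $K_2 \cart K_2 \cart \cdots \cart K_2$. First I would record the radius of the relevant tree: since $K_2$ consists of two adjacent vertices, each vertex has eccentricity $1$, and hence $\mathrm{rad}(K_2) = 1$. Setting $T_0 = T_1 = \cdots = T_{n-1} = K_2$ in Theorem~\ref{thm:upper}, every term $\mathrm{rad}(T_i)$ equals $1$, so $\sum_{i=0}^{n-1} \mathrm{rad}(T_i) = n$. Substituting this sum into the bound furnished by the theorem immediately yields $\capt(Q_n) \le n \ceil{\lg n} - \floor{\frac{n-1}{2}} + 1$, which is the first (exact) part of the claimed statement.

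It then remains only to verify the asymptotic estimate. I would bound $\ceil{\lg n} \le \lg n + 1$, so that $n \ceil{\lg n} \le n \lg n + n$; together with the trivial inequality $n \ceil{\lg n} \ge n \lg n$, this shows $n \ceil{\lg n} = n \lg n + O(n)$. The remaining terms $-\floor{(n-1)/2} + 1$ are likewise $O(n)$, so the whole expression equals $n \lg n + O(n)$. Because $O(n) / (n \lg n) \to 0$ as $n \to \infty$, these lower-order contributions are absorbed into a $(1 + o(1))$ factor, giving $\capt(Q_n) \le (1 + o(1)) n \lg n$.

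Since the entire argument is a direct specialization of the already-established Theorem~\ref{thm:upper} followed by an elementary asymptotic simplification, I do not expect any genuine obstacle. The only points demanding even minor care are confirming that $\mathrm{rad}(K_2) = 1$ and checking that each lower-order term is indeed $o(n \lg n)$; both are routine.
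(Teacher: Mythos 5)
Your proposal is correct and follows exactly the route the paper intends: the corollary is obtained by specializing Theorem~\ref{thm:upper} to $T_0 = \cdots = T_{n-1} = K_2$, using $\mathrm{rad}(K_2) = 1$ so that the radius sum is $n$, and then noting that the remaining terms are $O(n) = o(n \lg n)$. The paper states this as an immediate special case without a written proof, and your computation fills in precisely those routine details.
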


Before concluding this section, we remark that the ideas in the proof of Theorem~\ref{thm:upper} can be applied in more generality.  Instead of playing the game on $T_0 \cart T_1 \cart \cdots \cart T_{n-1}$ for trees $T_i$, suppose we instead play on $G_0 \cart G_1 \cart \cdots \cart G_{n-1}$ for arbitrary graphs $G_i$.  In the proof of Theorem~\ref{thm:upper}, we had (essentially) $n/2$ cops, each of which sought to agree with the robber on all but two coordinates. In the game on $G_0 \cart G_1 \cart \cdots \cart G_{n-1}$, we can use $n$ teams of cops, with each team seeking to agree with the robber on all but one coordinate.  Suppose the team assigned to coordinate $i$ contains $k_i$ cops, where $k_i \ge c(G_i)$, for $0 \le i \le n-1$.  Once this team finishes Step 1, whenever the robber moves in the direction of coordinate $i$ (or stands still), the cops can change that coordinate according to a strategy realizing $\capt_{k_i}(G_i)$.  Eventually, some team captures the robber.  These modifications to the argument of Theorem~\ref{thm:upper} yield the following:

\begin{theorem}
For graphs $G_0, G_1, \ldots, G_{n-1}$ and integers $k_0, k_1, \ldots, k_{n-1}$ with $k_i \ge c(G_i)$, we have
$$\capt_K(G_0 \cart G_1 \cart \cdots \cart G_{n-1}) \le \left ( \sum_{i=0}^{n-1} \mathrm{rad}(G_i) \right )\ceil{\lg n} + \sum_{i=0}^{n-1} (\capt_{k_i}(G_i) - 1) + 1,$$
where $K = k_0 + k_1+ \cdots + k_{n-1}$.
\end{theorem}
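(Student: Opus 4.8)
The plan is to follow the proof of Theorem~\ref{thm:upper}, but with $n$ teams in place of its $\ceil{(n+1)/2}$ cops: team $i$ has $k_i$ cops and is charged with coordinate $i$. As indicated before the statement, team $i$ first drives itself into agreement with the robber in every coordinate except $i$ (Step~1), and then captures the robber by forcing agreement in coordinate $i$ as well (Step~2). I would place team $i$ at a central vertex of $G_j$ in every coordinate $j\ne i$, and at the initial cop positions of an optimal $\capt_{k_i}(G_i)$-strategy in coordinate $i$. Since a team never moves in the coordinate it is charged with during Step~1, team $i$ is still sitting at these optimal positions at the instant it enters Step~2. Although the teams no longer all start at a single vertex, within any squad the cops still agree in, and hence move identically in, every active coordinate, so the squad-splitting analysis applies unchanged.

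Step~1 reuses the phase/squad bookkeeping of Theorem~\ref{thm:upper} essentially verbatim, with two adjustments. Because each team must now isolate a single coordinate rather than a pair, the number of phases rises from $\ceil{\lg n}-1$ to $\ceil{\lg n}$; the identical double counting then shows that the number of rounds on which some squad makes matching progress is at most $\big(\sum_{i=0}^{n-1}\mathrm{rad}(G_i)\big)\ceil{\lg n}$. I also have to verify that matching survives the passage from trees to arbitrary graphs, and here the relevant facts are general: a team does not move in a coordinate until it begins to match it, so while it rests at the central vertex $v_j$ its $G_j$-distance to the robber is at most $\mathrm{rad}(G_j)$; and once it does approach, it can always step to a neighbour strictly nearer the robber's current $G_j$-position, so this distance never increases again. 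Uniqueness of the approaching move, the only place the tree hypothesis was used, plays no role. Thus each coordinate still costs a team at most $\mathrm{rad}(G_i)$ approaching moves.

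For Step~2, once team $i$ has completed Step~1 it maintains agreement in each coordinate $j\ne i$ by copying the robber's move there, and it advances its optimal $\capt_{k_i}(G_i)$-strategy only on the rounds when the robber moves in coordinate $i$ or stands still. On such a round the robber's $G_i$-behaviour, a move or a pass, is a legitimate robber action on $G_i$, so since $k_i\ge c(G_i)$ and team $i$ is optimally placed, it captures after at most $\capt_{k_i}(G_i)$ of them. The one round on which some team finally captures I would count once, globally; every earlier advancing round of a team $i$ is then one of at most $\capt_{k_i}(G_i)-1$ Step~2 progress rounds, giving a total of $\sum_{i=0}^{n-1}(\capt_{k_i}(G_i)-1)+1$.

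To conclude I would run the same partition argument: the inactive coordinates across the nonempty squads always partition $\{0,\dots,n-1\}$, so every robber move and every pass yields progress for at least one squad, in Step~1 or in Step~2, and summing the two budgets gives the stated bound. The step I expect to require the most care is the legitimacy of Step~1 for non-tree factors, since a team of $k_i\ge c(G_i)$ cops cannot in general capture the robber's projection in a coordinate $j$ with $c(G_j)>k_i$. The point to make precisely is that a team never tries to: it only closes the gap in coordinate $j$ on rounds when the robber's $G_j$-coordinate is static, and whenever the robber instead keeps moving in coordinate $j$ to prevent this, those very rounds are charged as progress to the unique squad for which $j$ is inactive. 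Hence it is the global counting, rather than any per-coordinate cop-number assumption, that forces the robber out of any single coordinate, and matching a static target needs only the single-cop ``approach'' move available in any connected graph.
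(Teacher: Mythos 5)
Your overall architecture ($n$ teams, team $i$ charged with coordinate $i$, the squad/phase bookkeeping with $\ceil{\lg n}$ phases, Step~2 run from pre-placed optimal $\capt_{k_i}(G_i)$ positions, and the final $\sum_i(\capt_{k_i}(G_i)-1)+1$ count) is exactly the modification the paper sketches, and your Step~2 accounting is clean. The problem is in Step~1, at precisely the point you flag as needing the most care. Your claim that once a cop begins approaching the robber's projection in $G_j$ ``it can always step to a neighbour strictly nearer the robber's current $G_j$-position, so this distance never increases again'' is false: the cop's step reduces the distance from $d$ to $d-1$, but on a later round the robber may move in coordinate $j$ and push it back to $d$. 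On a cycle $C_5$, for instance, a single pursuer who always steps toward the target never closes a gap of $2$ against a target that keeps moving. The tree hypothesis in Theorem~\ref{thm:upper} is therefore doing more work than guaranteeing a unique approaching move: it is what guarantees (via the standard shrinking-territory argument on a tree) that a cop starting at a central vertex needs at most $\mathrm{rad}(T_i)$ approaching moves in coordinate $i$ \emph{no matter how the robber's projection moves in the meantime}. That is the fact underlying the budget $N=\sum_{i\in\seq(s')}\mathrm{rad}(T_i)$, and it has no analogue for general $G_i$.

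Your closing paragraph does not repair this. It is true that every round on which the robber moves in coordinate $j$ is charged to the unique squad for which $j$ is inactive; but the wasted approaching moves it causes are charged to the budgets of the \emph{other} squads, for which $j$ is active and not yet matched (and a single such robber move can simultaneously reopen the gap for several squads at once). So a squad may need strictly more than $\sum_{i\in\seq(s')}\mathrm{rad}(G_i)$ of ``its'' rounds to finish a phase, and the double count that yields $\bigl(\sum_i\mathrm{rad}(G_i)\bigr)\ceil{\lg n}$ no longer follows. To close the argument you need either a per-coordinate potential that cannot be undone by the robber's moves in that coordinate (as in the tree case), or an explicit global accounting showing that the robber's total ability to reopen gaps is absorbed by the stated budgets; as written, neither is supplied.
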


Note that when each $G_i$ is a tree, this result is weaker than Theorem~\ref{thm:upper} both in terms of the bound on the capture time and in terms of the number of cops used.

\section{Lower bound}\label{lower}

We next show that $\capt(Q_n) \ge (1-o(1))\frac{1}{2}n \ln n$.  Our proof of this bound involves two strategies, one for the cops and one for the robber.  On each turn, a robber using the {\em random strategy} moves to an adjacent vertex chosen uniformly at random.  A cop using the {\em greedy strategy} moves one step closer to the robber on each turn; when there are multiple ways to do this, the cop chooses between them arbitrarily.  (For both strategies, the players may choose their starting locations ``intelligently''.)  As it turns out, the greedy strategy is in some sense the optimal response to the random strategy; we formalize this intuition below. The probability of an event $A$ is denoted by $\prob{A},$ and the expected value of a random variable $X$ is denoted by $\expect{X}$.

\begin{lemma}\label{greedy_optimal}
Fix $T$ a positive integer.  Consider a cop C attempting to capture a robber R using the random strategy on the cube $Q_n$, and suppose R has the next move.  For $d \le n$, define
$$p_d = \max \prob{\text{C catches R in under }T \text{ rounds} \given \text{C and R are distance }d \text{ apart}},$$
where the maximum is taken over all possible strategies for C.  Then the following inequalities hold for $1 \le k \le n/2$:
\begin{enumerate}[(i)]
\item $p_{2k-2} \ge p_{2k}$
\item $p_{2k} \ge p_{2k-1}$
\end{enumerate}
\end{lemma}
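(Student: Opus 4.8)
The plan is to reduce the game to a one-dimensional Markov decision process governed entirely by the distance, and then to prove the two claimed inequalities, together with one auxiliary inequality, by a single induction on the number of rounds.

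First I would set up the reduction. For a cop at $c$ and a robber at $r$ in $Q_n$, let $S = \{i : c_i \neq r_i\}$ be the set of coordinates in which they disagree, so that the distance equals $\size{S}$. A robber move under the random strategy toggles the membership in $S$ of a uniformly random coordinate of $\{0,\dots,n-1\}$, and a cop move (a pass included) toggles the membership of at most one coordinate of the cop's choosing; capture corresponds to $S = \emptyset$. Since every permutation of coordinates is an automorphism of $Q_n$ preserving the random strategy, the only feature of a configuration relevant to future play is $\size{S}$. Hence the cop may be assumed to play a strategy depending only on the current distance, and $p_d$ is attained by such a strategy. Over one round from distance $d$ (robber first), the robber reaches distance $d-1$ with probability $d/n$ and distance $d+1$ with probability $(n-d)/n$, after which the cop changes the distance by $-1$, $0$, or $+1$ to maximize future capture. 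Writing $V_m(d)$ for the maximum probability of capture within $m$ rounds from distance $d$ with the robber to move (so that $p_d = V_{T-1}(d)$, with $V_m(0)=1$), and $W_m(e) = \max\{V_m(e-1), V_m(e), V_m(e+1)\}$ for the cop's best response after a robber move to distance $e\ge 1$ (with $W_m(0)=1$ and the obvious truncation at $e=n$), I obtain the recursion
$$V_{m+1}(d) = \frac{d}{n}\, W_m(d-1) + \frac{n-d}{n}\, W_m(d+1).$$

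The heart of the argument is then a simultaneous induction on $m$ establishing three monotonicity properties of $V_m$, valid for $1 \le k \le n/2$ in the appropriate ranges: (P1) $V_m(2k-2) \ge V_m(2k)$; (P2) $V_m(2k-1) \ge V_m(2k+1)$; and (P3) $V_m(2k) \ge V_m(2k-1)$. Here (P1) and (P3) are exactly conclusions (i) and (ii), while (P2) is the auxiliary ingredient needed to close the induction. The base case $m=0$ is immediate. For the inductive step, assuming (P1)--(P3) for $V_m$ one first computes $W_m(2j-1) = V_m(2j-2)$ and $W_m(2j) = V_m(2j)$, reflecting that the cop prefers to end its move at an even distance. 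Substituting into the recursion makes $V_{m+1}(2k)$ and $V_{m+1}(2k-1)$ convex combinations of the \emph{same} two values $V_m(2k-2) \ge V_m(2k)$, with $V_{m+1}(2k)$ placing the larger weight $2k/n$ on the larger value $V_m(2k-2)$; subtracting gives
$$V_{m+1}(2k) - V_{m+1}(2k-1) = \frac{1}{n}\bigl(V_m(2k-2) - V_m(2k)\bigr) \ge 0,$$
which is (P3). Properties (P1) and (P2) follow from analogous one-line computations in which, after writing the three relevant values of $V_m$ as a common middle value plus nonnegative deviations, the middle coefficient cancels identically and the surviving coefficients are nonnegative precisely because $1 \le k \le n/2$.

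The main obstacle will be justifying the reduction cleanly and, above all, carrying the extra inequality (P2) through the induction: although the lemma asserts only (P1) and (P3), neither can be advanced in isolation, since evaluating $W_m$ at even distances needs the odd-distance monotonicity (P2) while evaluating it at odd distances needs (P1) and (P3). Some additional care is required at the boundaries $d \in \{0,1,n-1,n\}$ and in fixing the exact meaning of capture ``in under $T$ rounds,'' but these affect only the indexing and the truncated terms of the recursion, not the algebraic identities above.
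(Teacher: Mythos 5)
Your proposal is correct and follows essentially the same route as the paper: induction on the time horizon, reduction to a distance-only process, the observation that the cop's best response keeps the distance even (your $W_m(2j-1)=V_m(2j-2)$, $W_m(2j)=V_m(2j)$), and the comparison of the resulting convex combinations of $V_m(2k-2)$ and $V_m(2k)$. The only substantive difference is that your auxiliary inequality (P2) is in fact redundant --- $V_m(2j)\ge V_m(2j+2)\ge V_m(2j+1)$ follows by chaining (P1) and (P3), which is exactly how the paper closes its induction using only (i) and (ii) --- though carrying it does no harm.
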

\begin{proof}
We prove both claims simultaneously by induction on $T$.  The claims are easily verified when $T = 1$, so fix $T > 1$.  For $d \le n$, define
$$p'_d = \max \prob{\text{C catches R in under }T-1 \text{ rounds} \given \text{C and R are distance }d \text{ apart}},$$
where the maximum is again taken over all strategies for C.  Note that always $p'_d \le p_d$.

For the proof of item (i), by the induction hypothesis, $p'_{2k-2} \ge p'_{2k} \ge p'_{2k-1}$ and $p'_{2k} \ge p'_{2k+2} \ge p'_{2k+1}$.  Suppose C is at distance $2k$ from R.  Let $d$ be the distance between C and R after one additional round of the game; since the distance between the players cannot change by more than 2 in a single round, $2k-2 \le d \le 2k+2$.  Thus, $p_{2k} \le p'_{2k-2} \le p_{2k-2}.$

For item (ii), suppose C and R begin distance $2k$ apart.  If C plays greedily, then after one additional round, the distance between the players is $2k-2$ with probability $2k/n$ and $2k$ with probability $(n-2k)/n$.  Hence, $p_{2k} \ge \frac{2k}{n}p'_{2k-2} + \frac{n-2k}{n}p'_{2k}$.

Suppose instead that C and R begin distance $2k-1$ apart.  If R moves closer to C, then after C's subsequent move, the distance between the players is one of $2k-3, 2k-2,$ and $2k-1$.  If instead R moves farther from C, then after C's move, the distance is one of $2k-1, 2k,$ and $2k+1$.  By the induction hypothesis, $p'_{2k-2} \ge p'_{2k-3}$ and $p'_{2k-2} \ge p'_{2k} \ge p'_{2k-1}$, so $p'_{2k-2} = \max \{p'_{2k-1},p'_{2k-2},p'_{2k-3}\}$.  Similarly, $p'_{2k} = \max \{p'_{2k-1}, p'_{2k}, p'_{2k+1}\}$.  Therefore,
$$p_{2k-1} \le \frac{2k-1}{n}p'_{2k-2} + \frac{n-2k+1}{n}p'_{2k} \le \frac{2k}{n}p'_{2k-2} + \frac{n-2k}{n}p'_{2k} \le p_{2k},$$
which completes the proof.
\end{proof}

Lemma~\ref{greedy_optimal} shows that, against a random robber, the cop should keep the distance between the players even and, subject to that, minimize the distance.  We have, therefore, that the following cop strategy is optimal against a random robber: if the robber is an even distance away, sit still; otherwise, play greedily.  The lemma shows that this strategy is optimal in that it maximizes the probability of capturing the robber ``quickly'', but in fact this optimality can be proved in a stronger sense.  A more detailed coupling argument shows that for any cop strategy, there is a greedy strategy that captures the robber no less quickly, assuming identical sequences of robber moves.  Since we need only the weaker property established in the lemma, we omit the proof of this stronger claim.

A single greedy cop will eventually capture a random robber.  In this scenario, the capturing process behaves similarly to the well-known ``coupon-collector'' process.  In the coupon-collector problem, there are $m$ types of coupons, and the aim is to collect at least one coupon of each type.  One coupon is obtained in each round, and the type of coupon is chosen uniformly at random.  It is well-known that the expected number of rounds needed to collect all $m$ types of coupon is $(1 + o(1)) m \ln m$.

When all but $i$ coupons have already been collected, the probability of obtaining a new coupon on the next turn is $i/m$, so the number of rounds needed to collect a new coupon is geometrically distributed with probability of success $i/m$.  As we will show, similar behaviour arises when a greedy cop plays against a random robber on $Q_n$.  In our analysis of $\capt(Q_n)$, we will need to bound the probability that the actual capturing time is significantly less than its expectation.  For this we use a slight generalization of a result by Doerr (see Theorem~1.24 in~\cite{Doe11}).  The proof of this generalization is quite similar to that of Doerr's original result, but we include the full proof for completeness.

\begin{lemma}\label{coupon}
Consider the coupon-collector process with $m$ coupons in total, of which all but $m_0$ have already been collected.  Let $X$ be a random variable denoting the number of rounds needed to collect the remaining $m_0$ coupons.  For $\eps > 0$, we have that
$$\prob{X < (1-\eps)(m-1) \ln m} \le \exp(-m^{-1+\eps}m_0).$$
\end{lemma}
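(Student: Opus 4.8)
```latex
The plan is to bound $\prob{X < (1-\eps)(m-1)\ln m}$ directly via a Chernoff-type argument applied to $X$.  Since the $m_0$ remaining coupons are collected one at a time and the waiting times between successive new coupons are independent geometric random variables, I would write $X = \sum_{i=1}^{m_0} X_i$, where $X_i$ is geometrically distributed with success probability $p_i = i/m$ (here I index so that $X_i$ counts the rounds spent while exactly $i$ of the sought coupons remain uncollected).  The natural tool is the exponential moment method: for any $t > 0$,
$$\prob{X < (1-\eps)(m-1)\ln m} = \prob{e^{-tX} > e^{-t(1-\eps)(m-1)\ln m}} \le e^{t(1-\eps)(m-1)\ln m}\, \expect{e^{-tX}}.$$
Because the $X_i$ are independent, $\expect{e^{-tX}} = \prod_{i=1}^{m_0} \expect{e^{-tX_i}}$, and the moment generating function of a geometric random variable has a clean closed form.

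The key computation is therefore to estimate $\expect{e^{-tX_i}}$ for a geometric variable with success probability $p_i = i/m$.  A standard fact is that $\expect{e^{-tX_i}} = \frac{p_i e^{-t}}{1 - (1-p_i)e^{-t}}$, and I would bound this from above by something of the form $\frac{p_i}{p_i + t}$ (or a comparable expression obtained by using $e^{-t} \le 1 - t + t^2/2$ and $1 - (1-p_i)e^{-t} \ge p_i + (1-p_i)t$).  Taking the product over $i$ then yields
$$\expect{e^{-tX}} \le \prod_{i=1}^{m_0} \frac{i/m}{i/m + t} = \prod_{i=1}^{m_0} \frac{i}{i + tm}.$$
The plan is then to choose $t$ optimally and to bound this product using $\ln(1+x) \ge x - x^2/2$ or, more crudely, $\frac{i}{i+tm} \le e^{-tm/(i+tm)}$, converting the product into an exponential of a harmonic-type sum $\sum_{i=1}^{m_0} \frac{tm}{i+tm}$.

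The main obstacle will be matching the constants so that the final bound comes out as exactly $\exp(-m^{-1+\eps}m_0)$ rather than merely an expression of the same shape.  Concretely, after substituting a well-chosen $t$ (I anticipate $t$ of order $m^{-1+\eps}$ or $t = (m-1)^{-1}$ times a logarithmic factor, following Doerr's choice), the term $e^{t(1-\eps)(m-1)\ln m}$ must be balanced against the product so that the $m_0$ uncollected coupons each contribute a factor $\exp(-m^{-1+\eps})$.  I would verify that the harmonic sum $\sum_{i=1}^{m_0} \frac{tm}{i+tm}$ exceeds $(1-\eps)(m-1)\ln m \cdot t + m^{-1+\eps}m_0$ for the chosen $t$; the subtlety is that this must hold uniformly in $m_0$, so the per-coupon bound has to be derived before summing rather than relying on the full range $i = 1, \ldots, m$.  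Since the lemma explicitly allows arbitrary $m_0 \le m$, I expect the cleanest route is to prove a per-coupon inequality $\expect{e^{-tX_i}} \le \exp(-m^{-1+\eps})$ valid for every relevant $i$ at the optimizing $t$, so that the product immediately gives $\exp(-m^{-1+\eps}m_0)$ after the $e^{t(1-\eps)(m-1)\ln m}$ factor is absorbed.  Getting this single-coupon estimate tight is where the real work lies; the rest is routine bookkeeping.
```
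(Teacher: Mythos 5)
Your approach has a genuine gap: the exponential-moment method applied to the sum $X=\sum_{i=1}^{m_0}X_i$ cannot yield the stated bound for all $m_0$. Write $T=(1-\eps)(m-1)\ln m$. By Jensen's inequality, $\expect{e^{-tX}}\ge e^{-t\expect{X}}$, so $e^{tT}\,\expect{e^{-tX}}\ge e^{t(T-\expect{X})}$; since $\expect{X}=\sum_{i=1}^{m_0}m/i=mH_{m_0}$, whenever $m_0$ is at most roughly $m^{1-\eps}$ (in particular for $m_0=1$ and $m$ moderately large) we have $\expect{X}\le T$, and your bound is at least $1$ for \emph{every} $t>0$, while the claimed bound $\exp(-m^{-1+\eps}m_0)$ is strictly below $1$. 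The lemma is stated for arbitrary $m_0$, and its content in this range comes from structure that the sum decomposition discards: $\{X<T\}$ is exactly the event that \emph{each} of the $m_0$ missing coupon types is individually drawn within $T$ rounds, and each such event already fails with probability at least $(1-1/m)^T\ge m^{-1+\eps}$. The same obstruction defeats your proposed uniform per-coupon inequality: however you distribute the factor $e^{tT}$ among the coupons, you need $\sum_i\bigl(-\ln\expect{e^{-tX_i}}\bigr)\ge tT+m^{-1+\eps}m_0$, but Jensen gives $-\ln\expect{e^{-tX_i}}\le t\,\expect{X_i}$, so the left side is at most $t\,\expect{X}$ and the requirement forces $t(\expect{X}-T)\ge m^{-1+\eps}m_0$, which is impossible when $\expect{X}\le T$. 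Even in the regime $m_0\gg m^{1-\eps}$ (which does cover the paper's application, where $m_0\approx m/4$), optimizing $t$ in $e^{tT}\prod_i \frac{i}{i+tm}$ recovers only $\exp\bigl(-(1-o(1))m^{-1+\eps}m_0\bigr)$ with lower-order losses coming from the $\Gamma$-function asymptotics of the product, not the clean constant claimed.

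The paper's proof avoids all of this and is not a Chernoff argument. For each uncollected type it introduces the indicator of the event ``this type is drawn within $T$ rounds,'' computes that this indicator equals $1$ with probability $1-(1-1/m)^T\le 1-m^{-1+\eps}$, proves by a short conditioning argument (plus induction on the size of the index set) that these indicators are negatively correlated, and concludes $\prob{X<T}=\prob{\text{all indicators equal }1}\le(1-m^{-1+\eps})^{m_0}\le\exp(-m^{-1+\eps}m_0)$. If you wish to salvage a moment-method proof you would at minimum have to weaken the conclusion and restrict the range of $m_0$; the negative-correlation argument is both simpler and strictly stronger here.
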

\begin{proof}
Let $T = (1-\eps)(m-1)\ln m$.  For the $i$th uncollected coupon, where $i \in [m_0]$, let $X_i$ be an indicator random variable for the event that this type of coupon is collected within $T$ rounds.  It is straightforward to see that
\begin{eqnarray*}
\prob{X_i = 1} &=& 1 - \left (1 - \frac{1}{m}\right )^T \le 1 - \exp(-T/(m-1)) \\
&=& 1 - \exp(-(1-\eps)\ln m) = 1 - m^{-1+\eps},
\end{eqnarray*}
where the inequality above uses the standard inequality $(1 - 1/m) \ge \exp(-1/(m-1))$, which is valid for $m > 1$.
Fix $I \subseteq [m_0]$ and $j \in [m_0] \setminus I$.  Now
\begin{eqnarray*}
\prob{X_i = 1 \text{ for all } i \in I} &=&\prob{X_i = 1 \text{ for all } i \in I \given X_j = 1} \cdot \prob{X_j = 1}\\
                                          && + \prob{X_i = 1 \text{ for all } i \in I \given X_j = 0} \cdot \prob{X_j = 0}.
\end{eqnarray*}
But since
$$\prob{X_i = 1 \text{ for all } i \in I \given X_j = 0} \ge \prob{X_i = 1 \text{ for all } i \in I},$$
it follows that
$$\prob{X_i = 1 \text{ for all } i \in I \given X_j = 1} \le \prob{X_i = 1 \text{ for all } i \in I}.$$
Thus,
$$\prob{X_i = 1 \text{ for all } i \in I \cup \{j\}} \le \prob{X_i = 1 \text{ for all } i \in I} \cdot \prob{X_j = 1}.$$
Now induction yields
$$\prob{X_i = 1 \text{ for all } i \in [m_0]} \le \prod_{i=1}^{m_0} \prob{X_i = 1} \le (1 - m^{-1+\eps})^{m_0} \le \exp(-m^{-1+\eps}m_0),$$
where the last inequality follows because $(1+x) \le \exp(x)$ for all $x$.
\end{proof}

We are now ready to prove our lower bound on $\capt(Q_n)$, and the proof of this result concludes the proof of Theorem~\ref{time}.  We state the lower bound in greater generality.

\begin{theorem}\label{main2}
Fix a positive constant $d$.  On $Q_n$, a robber can escape capture against $n^d$ cops for at least $(1-o(1))\frac{1}{2} n \ln n$ rounds.
\end{theorem}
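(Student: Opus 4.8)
\emph{Proof proposal.} The plan is to have the robber follow the \emph{random strategy} and to show that, with probability tending to $1$, no cop captures him during the first $T$ rounds, where $T = (1-\eps)(m-1)\ln m$ with $m = \floor{n/2}$ and $\eps > 0$ an arbitrarily small constant. Since $\ln m = (1-o(1))\ln n$ and $m-1 = (1-o(1))\frac{n}{2}$, we have $T = (1-\eps)(1-o(1))\frac12 n\ln n$, so letting $\eps \to 0$ will give the desired bound $(1-o(1))\frac12 n\ln n$. The argument combines three ingredients: a counting bound to place the robber far from every cop, the identification of the greedy capture process with a coupon-collector process, and a union bound over the $n^d$ cops using the tail estimate of Lemma~\ref{coupon}.

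First I would fix the robber's initial vertex. The number of vertices of $Q_n$ within distance $R$ of a fixed vertex is $\sum_{j=0}^{R}\binom{n}{j} \le 2^{nH(R/n)}$, where $H$ denotes the binary entropy function. Taking $R = \floor{n/3}$, the $n^d$ cops together lie within distance $R$ of at most $n^d\, 2^{nH(1/3)} = o(2^n)$ vertices, since $H(1/3) < 1$. Hence some vertex $v_0$ lies at distance greater than $R$ from every cop, and the robber starts there; the only feature we use is that his distance to each cop is $\Omega(n)$.

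Next I would analyze a single cop in isolation. By Lemma~\ref{greedy_optimal} and the discussion following it, the cop strategy maximizing the probability of a quick capture against the random robber is to stand still whenever the distance is even and to play greedily otherwise. Tracking the distance immediately after each cop move, a direct round-by-round computation shows that from an even distance $2k$ this strategy yields, after one full round, distance $2k-2$ with probability $2k/n$ and distance $2k$ with probability $1-2k/n$. Therefore the number of rounds this cop needs to drive the distance from $2k_0$ down to $0$ has exactly the distribution of the time for the coupon-collector process with $m = n/2$ coupons to collect its final $k_0$ coupons: identifying a distance of $2k$ with $k$ uncollected coupons makes the per-round success probability $2k/n$ agree with $k/m$. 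Applying Lemma~\ref{coupon} with this $m$ and with $m_0 = k_0 = \Omega(n)$, the probability that the cop catches the robber in fewer than $T$ rounds is at most $\exp(-m^{-1+\eps}k_0) = \exp(-\Omega(n^{\eps}))$. By the monotonicity in Lemma~\ref{greedy_optimal}—the capture probability is largest at small even distances, and each odd distance is dominated by the even distance just below it—this same estimate bounds the capture probability for a cop starting at any distance $\Omega(n)$ from the robber. A union bound then shows that the probability that \emph{some} cop captures the robber within $T$ rounds is at most $n^d \exp(-\Omega(n^{\eps}))$, which tends to $0$. Consequently the random robber survives all $T$ rounds with positive probability against any cop strategies; were the cops able to guarantee capture by round $T-1$, they would capture the random robber with probability $1$, a contradiction. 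Hence $\capt(Q_n) \ge T = (1-o(1))\frac12 n\ln n$.

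I expect the main obstacle to be making the coupon-collector correspondence precise: justifying through Lemma~\ref{greedy_optimal} that it suffices to analyze the even-stand-still/greedy strategy, and verifying that the distance dynamics it induces coincide round-for-round with collecting the final $k_0$ of $m = n/2$ coupons. A second delicate point is the parameter balance—the threshold $(1-\eps)(m-1)\ln m$ of Lemma~\ref{coupon} must remain $(1-o(1))\frac12 n\ln n$ while its failure probability $\exp(-m^{-1+\eps}k_0)$ still beats the union-bound factor $n^d$, which works precisely because $n^{\eps}$ outgrows every power of $\ln n$.
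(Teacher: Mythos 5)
Your proposal is correct and follows essentially the same route as the paper: place the robber at distance $\Omega(n)$ from all cops by a counting argument, reduce to a greedy cop via Lemma~\ref{greedy_optimal}, identify the distance process with collecting the last $\Omega(n)$ of $n/2$ coupons, and finish with Lemma~\ref{coupon} plus a union bound. The only difference is cosmetic: you fix a constant $\eps$ and let $\eps \to 0$ at the end, whereas the paper chooses a single explicit $\eps(n) = \Theta(\ln\ln n/\ln n)$ tuned so that $n^d \exp(-(n/2)^{\eps}/4) \to 0$, which yields the same asymptotic conclusion.
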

\begin{proof}
Let $T = \frac{1}{2} (n-1) \ln n$ and let
$$
\eps = \eps(n) = \frac {\ln ((4d+1) \ln n)}{\ln n} = \frac {O(\ln \ln n)}{\ln n} = o(1).
$$
Fix a cop strategy.  We claim that for sufficiently large $n$, a random robber evades capture for at least $(1-\eps)T$ rounds with positive probability; it would then follow that some particular set of random choices, and hence some particular deterministic strategy, allows the robber to survive this long.

For the initial placement, the robber aims to choose a starting vertex that is relatively far from \emph{all} cops.  The number of vertices of $Q_n$ within distance $k$
of any given cop is $\sum_{i=0}^k \binom{n}{i}$, so the number of vertices within distance $k$ of any of the $n^d$ cops is at most
$$
n^d\sum_{i=0}^k \binom{n}{i} \le n^d (k+1) \binom{n}{k} \le 2 n^d k \left( \frac {ne}{k} \right)^k \le n^{d+1} \left( (4e)^{1/4} \right)^n < n^{d+1} 1.85^n,
$$
when $k = n/4$. For sufficiently large $n$ we have $n^d\sum_{i=0}^k \binom{n}{i} < 2^n$ , so the robber can choose an initial position at least distance $n/4+1$ from every cop.  (In fact more careful analysis permits $k = (1 - o(1))n/2$, but using $n/4+1$ simplifies later computations without affecting the asymptotics of the final result.)

To analyze the effectiveness of the robber's strategy, we use the following approach.  First, we focus on a single cop.  We determine the probability that this cop captures the robber within $(1-\eps)T$ rounds.  We then apply the union bound to obtain an upper bound on the probability that any of the cops can capture the robber within $(1-\eps)T$ rounds.  If this probability is less than 1, then with positive probability the robber evades capture.

Suppose cop C is at distance $2k$ from the robber (on the robber's turn).  By Lemma~\ref{greedy_optimal}, we may assume C uses a greedy strategy.  On any given round, the robber moves toward C with probability $2k/n$.  Thus, after the robber's move and C's response, the distance between the two is $2k-2$ with probability $2k/n$ and $2k$, otherwise.  Letting $X_i$ denote the number of rounds for which the cop remains at distance $2i$ from the robber, we see that $X_i$ is geometrically distributed with probability of success $2i/n$; the number of rounds needed for C to catch the robber is at most $\sum_{i=1}^{k} X_i$.  Hence, this process is equivalent to the coupon-collector process with $n/2$ coupons, if we suppose that all but $k$ coupons have already been collected.  Since the robber begins at least distance $n/4+1$ from C, we may (again by Lemma~\ref{greedy_optimal}) take $k = \ceil{n/8}$ to obtain a lower bound on the expected length of the process:
$$\sum_{i=1}^{\ceil{n/8}} \expect{X_i} = \sum_{i=1}^{\ceil{n/8}} \frac{n}{2i} = \frac{n}{2} \sum_{i=1}^{\ceil{n/8}} \frac{1}{i},$$
which tends to $\frac{n}{2}(\ln n - \ln 8 + \gamma)$, where $\gamma \approx 0.557$ denotes the Euler-Mascheroni constant.

By Lemma~\ref{coupon}, the probability that C captures the robber in under $(1-\eps)T$ rounds is at most $\exp(-(n/2)^{-1+\eps}n/8)$, which is
asymptotically $\exp(-(n/2)^{\eps}/4)$.  By the union bound, the probability that any of the cops captures the robber in under $(1-\eps)T$ rounds is thus, at most $(1 + o(1))n^d\exp(-(n/2)^{\eps}/4)$. Since
$$
\frac {(n/2)^{\eps}}{4} = \frac {1}{4} \exp \left( \eps \Big(\ln n + O(1) \Big) \right) = (1+o(1)) \frac 14 (4d+1) \ln n > \left( d + \frac {1}{5} \right) \ln n,
$$
this probability tends to 0 as $n$ tends to infinity.  Hence, with probability approaching 1 as $n \to \infty$, the robber escapes capture for at least $(1-\eps)T$ rounds.
\end{proof}

It remains open to compute the exact value of the capture time of the hypercube $Q_n$, although Theorem~\ref{time} derives it up to a constant factor. We conjecture that $\mathrm{capt}(Q_n) = (1 + o(1))\frac{1}{2}n \ln n.$

\end{document}